\newcommand{\fenv}[1]%
{\ensuremath{\,\overrightarrow{\operatorname{env}}_{#1}}}
\newcommand{\benv}[1]%
{\ensuremath{\,\overleftarrow{\operatorname{env}}_{#1}}}
\theoremstyle{thmstyleone}%
\theoremstyle{thmstyletwo}%
\theoremstyle{thmstylethree}%
\newtheorem{theorem}{Theorem}
\newtheorem{lemma}[theorem]{Lemma}
\newtheorem{proposition}[theorem]{Proposition}%
\newtheorem{definition}{Definition}%
\newtheorem{remark}{Remark}%
\newtheorem{alg}{Algorithm}[section]
\theoremstyle{definition}
\newcommand{\argmin}{{\rm arg}\!\min}
\newcommand{\nexto}{\kern -0.54em}
\newcommand{\dZ}{{\cal Z \kern -0.7em Z}}
\newcommand{\dC}{{\rm\hbox{C \kern-0.8em\raise0.2ex\hbox{\vrule height5.4pt width0.7pt}}}}
\newcommand{\dQ}{{\rm\hbox{Q \kern-0.85em\raise0.25ex\hbox{\vrule height5.4pt width0.7pt}}}}
\newcommand{\HH}{\mathcal{H}}
\newcommand{\RR}{\mathbb{R}}
\date{}
\begin{document}
\title{Iterative Schemes for Uniformly Nonconvex Equilibrium Problems}
\author{Oday Hazaimah
\footnote{E-mail: {\tt odayh982@yahoo.com}. https://orcid.org/0009-0000-8984-2500. 
St. Louis, MO, USA}}
%

\maketitle\thispagestyle{fancy}
\maketitle
\begin{abstract}\noindent Uniformly regular equilibrium problems are natural generalizations of abstract equilibrium problems and they are defined over the uniformly prox-regular nonconvex sets. Some new efficient implicit methods for solving uniformly regular equilibrium problems are analyzed by the auxiliary principle and inertial proximal methods. The convergence analysis of the new proposed methods is considered under some mild conditions. Gap functions are constructed to suggest some descent-type scheme for uniformly regular equilibrium problems. Our results can be viewed as significant refinements and improvements of the previously known results and they continue to hold for equilibrium problems, variational inequalities and complementarity problems as well.
\medskip 

\noindent {\textbf{Keywords}:}  Equilibrium problems; Normal cones; Proximal methods; Prox-regular nonconvex sets; Gap functions; Variational inequalities.
\medskip 

\noindent \textbf{\small Mathematics Subject Classification:}{ 49K35, 52A40, 65K10, 65K15, 90C25, 90C26.}
\end{abstract}

\maketitle

\section{Introduction}
Equilibrium is a concept widely employed and analysed for phenomena arising in operations research, transportation networks, financial situations and economics, to name a few, \cite{Giannessi et al, Mastroeminimax, Mastroeni,Nagurney}. Equilibrium problems by their nature tend to be complex and, often large-scale. Hence, the study of equilibrium problems has been challenging both from modeling as well as computational perspectives and they can be formulated as systems of nonlinear equations, optimization problems, complementarity problems, fixed point problems, or, more recently, variational inequalities for static and dynamic situations. Variational inequalities have emerged as a fruitful branch of applied mathematics with a wide range of real-world applications in industry, finance and engineering. 
Variational inequality theory can be considered as a natural generalization of the classical variational principles. To study and analyse the existence, stability and continuity dependence of the solution of variational inequalities, several numerical techniques were proposed including; (i) projection methods and its variants including Wiener-Hopf equations, (ii) resolvent dynamical systems are fulfilled by establishing the equivalence between variational inequalities and fixed-point problems, see for instance, \cite{Alvarez,Noor,NoorOettli,Giannessi,Glowinski,ZhuMarcotte}. It is well known that the convergence of the projection methods requires that the operator must be strongly monotone and Lipschitz continuous. Unfortunately these strict conditions rule out many applications of this type of methods. The extragradient-type methods overcome this difficulty by performing an additional forward step \cite{Korpelevich} (i.e., double projections). It is well-known that projection and resolvent type methods cannot be extended for mixed quasi variational inequalities. To overcome this drawback, Glowinski et al. \cite{Glowinski} used the auxiliary principle technique for variational inequalities, whereas Noor \cite{Noor,Noorequilibrium} modified the same technique to suggest some inertial proximal methods for equilibrium problems. In this paper, we modify the auxiliary principle technique to analyse novel iterative methods for wide class of equilibrium problems known as uniformly regular equilibrium problems. The convergence criteria of these methods is analyzed under some mild conditions. As a consequence of this approach, we construct the gap (merit) function for uniformly regular equilibrium problems, which can be used to develop descent-type methods for solving nonconvex equilibrium problems.  
Many numerical algorithms in convex optimization used the projection operator over a set. This is the case, for example, of the proximal point algorithm, the gradient projection algorithm (to name just a few). The class of nonempty closed convex sets of a Hilbert space provides a good example too. In order to go beyond the convexity, the class of uniformly prox-regular sets appear as a fundamental concept and it is much larger than that of convex sets, but shares with it many significant properties that are important to the applications in optimization, differential inclusions and control theory \cite{Adly,Clarke et al}. It is clear that the uniformly regular class includes several convex and nonconvex classes such as; $p$-convex sets, weakly convex sets, compact $C^2$ manifolds, proximally smooth sets, the images under a $C^{1,1}$ diffeomorphism of convex sets, see for more details \cite{Clarke et al, Noorequilibrium, PoliRockaThiba} and the references therein. Proximal regularity of a set at a point is a variational condition related to normal vectors which is common to many types of sets. A set $K\in\RR^n$ is prox-regular at a point $x\in K$ for a vector $v$ if and only if its indicator function $\delta_K$ is prox-regular function at $x$ for $v$. In particular a set is prox-regular if and only if the projection mapping is locally single-valued. Similarly, convex and convex-like functions can be captured in a larger class than that of convex functions, the class of nonconvex functions. A function is prox-regular at $\Bar{x}$ for the subgradient $\Bar{v}$ if and only if its epigraph is prox-regular at $(\Bar{x}, f(\Bar{x}))$ for the normal vector.
The class of prox-regular functions covers all lower semi-continuous, proper, convex functions, lower-$C^2$ functions, strongly amenable functions (i.e. convexly composite functions), hence a large core of functions of interest in variational analysis and optimization. These functions, despite being in general nonconvex, possess many of the properties that one would expect only to find in convex or near convex (lower-$C^2$) functions, for instance the Moreau-envelopes are $C^1$. There is an ongoing need, has arisen in the literature, for establishing equivalent formulations of equilibrium problems. One popular strategy is by constructing gap functions and this is called the regularization which is obtained by adding a strictly convex term $G(x,y)$ to the associated operator $F.$ However, practically, it is hard to satisfy the strict convexity assumption. To overcome this difficulty, an auxiliary equilibrium problem is introduced which coincides with the original equilibrium problem.
Merit (or gap) functions known as crucial tools to build bridges between variational inequalities, broadly-classified, and optimization-related problems.  
Another corresponding formulation of equilibrium problems through minimax (saddle) representation \cite{Mastroeni}. Fukushima \cite{Fukushima} defined a continuously differentiable gap function for variational inequalities by introducing an equivalent auxiliary equilibrium problem to the original one. 

The aim of this paper is to investigate and analyse a new algorithmic-approach designed to approximate the solution to a uniformly prox-regular equilibrium problem \eqref{unifreg}. Furthermore, we construct the gap (merit) function associated to uniformly regular equilibrium by using the auxiliary principle technique. This equivalent optimization-equilibrium reformulation can be used to develop descent-type algorithm for solving uniformly regular equilibrium problems under suitable conditions on the function $F$ and can further attack several problems by changing conditions on the parameters. It can be expected that the techniques described in this work will be useful for more elaborate dynamical models, and that the connection between such dynamical models and the solutions to the variational inequalities (besides a variety of variational classes) will provide a deeper understanding of general/mixed equilibrium problems beyond convexity.
\medskip

This paper is organized as follows: Next section exhibits basic facts and important definitions that will be used throughout the entire paper; Section $3$ proposes some algorithms for uniformly regular equilibrium problems and analyzes their convergence; Section $4$ deals with gap functions and suggests descent-type algorithm viewed in the optimization framework.

\section{Preliminaries} 
Some basic concepts and essential definitions are introduced in this section from the fields of nonsmooth analysis and variational analysis, see for details \cite{Clarke et al, Giannessi et al}. Let $K$ be a nonempty closed convex subset of a real Hilbert space $\HH$ endowed with an inner product $\langle .,.\rangle$ and its associated norm $\|.\|$, respectively. Suppose $F(.,.):\HH\times \HH\to\RR$ is a nonlinear bifunction, and consider the following problem: find $u\in K$ such that 
\begin{equation}\label{equilibrium}
    F(u,v)\geq 0, \ \ \forall v\in K.
\end{equation}
This is called the equilibrium problem, which was considered and studied by Blum and Oettli \cite{BlumOettli} and modified later by Noor and Oettli \cite{NoorOettli}. The main purpose of this paper is to extend some results of Noor \cite{Noorequilibrium} from considering equilibrium problems to a more general framework pertaining the class of uniformly regular equilibrium problems in which they are defined over a class of nonconvex sets known as uniformly prox-regular sets. We are interested in finding a point $u\in\bar{K}$, where $\bar{K}$ can be nonconvex set and given two positive parameters $k, r >0$, such that 
\begin{equation}\label{unifreg}
    F(u,v)+\frac{k}{2r}\|v-u\|^2\geq 0, \ \ \forall v\in \bar{K}.
\end{equation}
This generalization of equilibrium problem \eqref{equilibrium} is known as the uniformly regular equilibrium model, which was introduced and studied by Poliquin et al. \cite{PoliRockaThiba} and Clarke et al \cite{Clarke et al}.
A very well-known special case of \eqref{equilibrium}, is the so-called classical variational inequality which was first proposed and studied by Stampacchia \cite{Stampacchia}, as a tool for studying partial differential equations in infinite dimensional spaces, when the bifunction $F(u,v)=\langle Tu,v-u\rangle$, where $T:\HH\to \HH$ is a nonlinear operator, i.e., find $u\in K$ such that 
\begin{equation}\label{VI}
\langle Tu,v-u\rangle\geq 0, \ \ \forall v\in K.
\end{equation} 
It will be noted that the proposed algorithms in this work recover the classical variational inequality models.  
\begin{definition}
The proximal normal cone of a closed subset $K\subset\HH$ at $u\in K$, denoted by $N^P(K;u)$, is defined as 
$$N^P(K;u):=\{w\in\HH : \exists \ r>0, \ u\in P_K[u+rw]\}$$
\end{definition}
where the multimapping projection of nearest points in $K$ is defined by
$$P_k[u]=\{u^*\in K: d_K(u)=\|u-u^*\|\},$$
such that $d_K$ is the distance function from $K$ is defined as $d_K(u):=d(u,k)=\inf_{v\in K} \|v-u\|.$
It is not difficult to see that for every  $w\in\HH$, the inclusion $w\in N^P(K;u)$ holds contingent to the existence of a constant $r >0$, and satisfying the following definition:
\medskip 
\begin{definition}
Let $K$ be a nonempty closed subset of $\HH$ and $r\in (0,+\infty ],$ one says that $K$ is uniformly $r$-prox-regular if and only if every nonzero proximal normal to $k$ can be characterized by an $r$-ball, that is, for all $u\in K$ and for all $0 \neq w\in N^P(K;u)\cap \mathbb{B}_r$, we have 
$$\langle w, v-u\rangle\leq \frac{1}{2r}\|v-u\|^2, \ \ \forall v\in K.$$
\end{definition}
Note that if $r=\infty$, then the uniformly prox-regular set $K$ is corresponding with the convex set $K$, hence the uniformly regular nonconvex problem \eqref{unifreg} is reduced to the equilibrium problem \eqref{equilibrium}. We say that $w$ is a proximal normal to $K$ at $u$ with constant $r > 0$. In a general case, and since $K\subseteq\HH$ is a closed set, then $w\in N^P(K;u)$ if and only if there exists a constant $\epsilon >0$ such that 
$$\langle w,v-u\rangle \leq\epsilon\|v-u\|^2, \ \ \forall v\in K.$$

\begin{definition}
The Clarke normal cone, denoted by $N^C(K;u)$, is defined as 
$$N^C(K;u)=\overline{conv}(N^P(K;u)),$$
where $\overline{conv}$ denotes the closure of the convex hull of the proximal normal cone.
\end{definition}
\medskip
If $K$ is convex, it is known that the proximal normal cone $N^P(K;u)$ coincides with the Clarke normal cone in the sense of convex analysis, i.e.,
$N^P(K;u)=\{w\in\HH: \langle w,v-u\rangle\leq 0, \ \forall v\in K\}.$ At the same time, it is clear that if $K$ is uniformly prox-regular, then the proximal normal cone is set-valued mapping (hence closed) and therefore we have $N^P(K;u)=N^C(K;u).$ 
Note that $N^P(K;u)\subset N^C(K;u)$, and $N^C(K;u)$ is always closed and convex while $N^P(K;u)$ is convex but not necessarily closed.
\medskip 
\begin{definition}\label{Fpseudo}
A bifunction $F(.,.):\HH\times \HH\to\RR$ is said to be pseudomonotone on $K\times K$ for the uniformly regular equilibrium problem \eqref{unifreg}, if 
$$F(u,v)+\frac{k}{2r}\|v-u\|^2\geq 0 \implies F(v,u)+\frac{k}{2r}\|v-u\|^2\leq 0, \ \ \ \forall u,v\in K.$$
\end{definition}
\begin{definition}\label{fpseudo}
A mapping $f:\HH\to\HH$ is said to be pseudomonotone on $K$ if 
$$\langle f(u),v-u\rangle+\frac{k}{2r}\|v-u\|^2\geq 0 \implies \langle f(v),v-u\rangle+\frac{k}{2r}\|v-u\|^2\geq 0, \ \ \ \forall u,v\in K.$$
\end{definition}
It is clear from the definitions that the pseudomonotonicity of $f$ in Definition \ref{fpseudo} is equivalent to the pseudomonotonicity of $F$ on $K\times K$ if whenever $F(u,v)=\langle f(u),v-u\rangle.$ Also, it is known that if $f$ is monotone then it is pseudomonotone but the converse not true.  
The concept of merit (gap) functions as a bridge between optimization and equilibrium framework, will be tackled in section 4. A function $g:\HH\to\RR$ is called a regular gap (merit) function if there exists a set $K\subseteq\HH$ such that; 
\begin{itemize}
\item $g$ is nonnegative on $K$, and 
\item $x^*$ solves the equlibrium problem if and only if $x^*\in K$ and $g(x^*)=0$. 
\end{itemize}
\medskip
In general, regular gap functions are neither finite nor smooth no convex. For instance, if the operator $F$ is continuously differentiable on a feasible (bounded polyhedron) set $K$ then $g$ is finite, also if $F$ is monotone and affine (i.e., $F=Ax+b$, where $A$ is positive semi-definite) then $g$ is convex function. To overcome this disadvantage, several regular gap functions were introduced, discussed and analyzed by different authors for abstract equilibrium problems and some classes of variational inequalities, for more details we refer the reader to, for instance \cite{Mastroeni,Fukushima,PappaPassa}, and the references therein. Next, we define a nonlinear map that enables in constructing gap functions and plays an essential role in auxiliary equilibrium problems.
\medskip
\begin{definition}\label{Gbifunction}
Let $G(x,y):\HH\times\HH\to\RR$ be a bifunction satisfies the following properties:  
\begin{equation}\label{Gbifunction}
\begin{aligned}
G(x,y)\geq 0, \ \forall (x,y)\in\HH\times\HH, \\ 
G \ \text{is continuously differentiable,} \\
G(x,.)\ \text{is uniformly strongly convex on} \ K, \\
G(x,x)=G_y'(x,x)=0, \ \ \forall x\in K.
\end{aligned}
\end{equation}
\end{definition}

\section{Iterative Schemes}
In this chapter, we investigate and suggest some iterative schemes for solving uniformly regular equilibrium problems by means of an auxiliary principle technique, which was suggested by Glowinski et al. \cite{Glowinski} and developed by Noor \cite{Noor,Noorequilibrium}. Given a point $u\in \bar{K}$, consider the auxiliary technique of finding a unique $w\in \bar{K}$ such that
\begin{equation}\label{auxiliary}
    \lambda F(w,v)+\langle w-u+\gamma (u-u), v-w\rangle +\frac{k}{2r}\|v-u\|^2\geq 0 , \ \ \forall v\in \bar{K},
\end{equation}
where $\lambda > 0$ and $\gamma > 0$ are constants. Note that if $w=u$, then $w$ can be easily seen as a solution of the equilibrium problem \eqref{equilibrium}. Using the inner product fact $\|x\|^2:=\langle x,x\rangle$ for the last term in inequality \eqref{auxiliary}, along with some suitable modifications, the above auxiliary technique appears as: find $w\in \bar{K}$ such that 
\begin{equation}\label{auxiliarymodified}
\lambda F(w,v)+\big\langle\big (1+\frac{k}{2r}\big )(w-u)+\gamma (u-u)+\frac{k}{2r}(v-w), v-w\big\rangle \geq 0 , \ \ \forall v\in K,
\end{equation}

We observe that this inequality enables us to introduce and analyze the following iterative method for solving \eqref{unifreg}.
\medskip 

\begin{alg}\label{mainalgorithm} 
Let $\{\gamma_n\}$ be nonnegative sequence of real numbers. For a given $u_0\in H$, compute the approximate solution $u_{n+1}$ by the iterative scheme 
\begin{equation}\label{approximate}
\lambda F(u_{n+1},v)+\Big\langle\big (1+\frac{k}{2r}\big )(u_{n+1}-u_n)+\gamma_n (u_n-u_{n-1})+\frac{k}{2r}(v-u_{n+1}), v-u_{n+1}\Big\rangle \geq 0 , \ \ \forall v\in \bar{K},
\end{equation}
\end{alg}
which is known as the inertial proximal method. Such type of inertial proximal algorithms have been used by Alvarez and Attouch \cite{Alvarez} and Noor  for variationa inequalities \eqref{VI}.
\medskip

\begin{remark}\label{proximalalgorithm}
For $\gamma_n=0$, Algorithm \ref{mainalgorithm} reduces to the iterative scheme 
\begin{equation}\label{proximalmethod}
\lambda F(u_{n+1},v)+\Big\langle\big (1+\frac{k}{2r}\big )(u_{n+1}-u_n)+\frac{k}{2r}(v-u_{n+1}), v-u_{n+1}\Big\rangle \geq 0 , \ \ \forall v\in \bar{K},
\end{equation}
which is called the proximal method for solving problem \eqref{unifreg}.
\end{remark}
\medskip

This observation shows that the inertial proximal method includes the classical proximal method as a special case. 
Another auxiliary technique can be noted and used in a problem of finding a unique $w\in \bar{K}$ such that
\begin{equation*}
    \lambda F(u,v)+\langle w-u, v-w\rangle +\frac{k}{2r}\|v-u\|^2\geq 0 , \ \ \forall v\in K.
\end{equation*}
which leads to the following iterative method for given the initial data $u_0\in \bar{K}$, we compute the approximate solution $u_{n+1}$ by the iteration:
\begin{equation*}
\lambda F(u_n,v)+\langle u_{n+1}-u_n, v-u_{n+1}\rangle +\frac{k}{2r}\|v-u_n\|^2\geq 0 , \ \ \forall v\in \bar{K}.
\end{equation*}
We leave the analysis for the latter iterative method to avoid redundancy in our calculations due to the close similarity with proving convergence for algorithm \ref{mainalgorithm} under some minor modifications. 
In the following two theorems we prove the convergence for algorithms \ref{mainalgorithm}, 
and for that, we need the following useful fact related to Euclidean norms:
\medskip

\begin{lemma}\label{lemma}
    Given $u,v\in\HH$. Then $2\langle u,v\rangle =\|u+v\|^2-\|u\|^2-\|v\|^2.$
\end{lemma}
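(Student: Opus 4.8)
The plan is to derive the identity directly from the two defining algebraic properties of the inner product on the real Hilbert space $\HH$: bilinearity (linearity in each argument) and symmetry, together with the relation $\|x\|^2=\langle x,x\rangle$ that links the norm to the inner product. No limiting or topological argument is needed, so this is a purely formal computation.

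First I would expand the squared norm of the sum by writing $\|u+v\|^2=\langle u+v,\,u+v\rangle$ and distributing over both slots. Using additivity in each argument gives $\langle u+v,u+v\rangle=\langle u,u\rangle+\langle u,v\rangle+\langle v,u\rangle+\langle v,v\rangle$. At this point I would invoke symmetry of the real inner product, $\langle u,v\rangle=\langle v,u\rangle$, so that the two cross terms coalesce into $2\langle u,v\rangle$. Recalling $\langle u,u\rangle=\|u\|^2$ and $\langle v,v\rangle=\|v\|^2$, this yields $\|u+v\|^2=\|u\|^2+2\langle u,v\rangle+\|v\|^2$.

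The final step is simply to rearrange the last equation, subtracting $\|u\|^2$ and $\|v\|^2$ from both sides, to obtain $2\langle u,v\rangle=\|u+v\|^2-\|u\|^2-\|v\|^2$, which is the claimed identity. There is no genuine obstacle here; the only point worth flagging is that the collapse of the cross terms into a factor of $2$ relies on working over the \emph{real} field, where the inner product is symmetric rather than conjugate-symmetric. Since the paper sets everything in a real Hilbert space, this is exactly the setting required, and the lemma follows.
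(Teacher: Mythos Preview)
Your proof is correct and is the standard derivation of this polarization identity. The paper itself does not supply a proof of this lemma at all; it merely states it as a ``useful fact related to Euclidean norms'' and then proceeds directly to the next theorem, so your argument simply fills in what the paper leaves implicit.
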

\medskip

\begin{theorem}\label{nonincreasing}
Let $u^*\in \bar{K}$ be a solution of problem \eqref{unifreg} and $u_{n+1}$ be the approximate solution using the proximal method in \eqref{proximalmethod}. For any positive constants $r,k >0$ and $\epsilon=\frac{k}{2r}$, and if $F$ is pseudomonotone, then 
$$\|u_{n+1}-u^*\|^2\leq (1+\epsilon)^2\|u_n-u^*\|^2-\|u_{n+1}-(1+\epsilon)u_n+\epsilon u^*\|^2.$$
\end{theorem}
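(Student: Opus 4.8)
The plan is to probe the variational scheme \eqref{proximalmethod} at the test point $v=u^*$, use pseudomonotonicity to dispose of the bifunction term, and then read off the estimate from the polarization identity in Lemma~\ref{lemma}. Note that prox-regularity itself never has to be invoked directly: its effect is already absorbed into the $\frac{k}{2r}$-term of the model \eqref{unifreg}, so pseudomonotonicity of $F$ is the only structural hypothesis that does real work.

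First I would extract information from the fact that $u^*$ is a solution. Since $u^*\in\bar K$ solves \eqref{unifreg}, we have $F(u^*,v)+\epsilon\|v-u^*\|^2\ge 0$ for every $v\in\bar K$. Applying the pseudomonotonicity of $F$ (Definition~\ref{Fpseudo}) with $u=u^*$ gives $F(v,u^*)+\epsilon\|v-u^*\|^2\le 0$ for all $v\in\bar K$, and specializing to $v=u_{n+1}$ yields
\[
F(u_{n+1},u^*)\le -\epsilon\|u_{n+1}-u^*\|^2\le 0 .
\]
Next I would insert the iterate into the scheme by taking $v=u^*$ in \eqref{proximalmethod}, obtaining
\[
\lambda F(u_{n+1},u^*)+\Big\langle (1+\epsilon)(u_{n+1}-u_n)+\epsilon(u^*-u_{n+1}),\,u^*-u_{n+1}\Big\rangle\ge 0 .
\]
Because $\lambda>0$ and $F(u_{n+1},u^*)\le 0$, the leading term is nonpositive and may be dropped, leaving $\big\langle (1+\epsilon)(u_{n+1}-u_n)+\epsilon(u^*-u_{n+1}),\,u^*-u_{n+1}\big\rangle\ge 0$.

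The key algebraic observation is that the first slot collapses: $(1+\epsilon)(u_{n+1}-u_n)+\epsilon(u^*-u_{n+1})=u_{n+1}-(1+\epsilon)u_n+\epsilon u^*$, which is exactly the vector appearing in the last term of the claim. Setting $w:=u_{n+1}-(1+\epsilon)u_n+\epsilon u^*=(u_{n+1}-u^*)-(1+\epsilon)(u_n-u^*)$, the inequality reads $\langle w,\,u_{n+1}-u^*\rangle\le 0$, i.e.
\[
\|u_{n+1}-u^*\|^2\le (1+\epsilon)\langle u_n-u^*,\,u_{n+1}-u^*\rangle .
\]
Finally I would convert the inner product into norms: expanding $\|w\|^2$ through Lemma~\ref{lemma} gives $\|w\|^2=\|u_{n+1}-u^*\|^2-2(1+\epsilon)\langle u_n-u^*,u_{n+1}-u^*\rangle+(1+\epsilon)^2\|u_n-u^*\|^2$, so that $(1+\epsilon)^2\|u_n-u^*\|^2-\|w\|^2=2(1+\epsilon)\langle u_n-u^*,u_{n+1}-u^*\rangle-\|u_{n+1}-u^*\|^2$. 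Comparing with the displayed bound, the claim is equivalent to $\|u_{n+1}-u^*\|^2\le (1+\epsilon)\langle u_n-u^*,u_{n+1}-u^*\rangle$, which is precisely what was just derived; substituting completes the argument.

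The two genuinely delicate points are (i) pinning down the sign $F(u_{n+1},u^*)\le 0$, which is what legitimizes discarding the bifunction term, and (ii) recognizing that the bracketed vector is identically $w$, which is what makes the polarization step close up perfectly. Everything else is routine bookkeeping with Lemma~\ref{lemma}, and the strict gain $\lambda\epsilon\|u_{n+1}-u^*\|^2$ that we threw away could be retained if a sharper contraction were desired.
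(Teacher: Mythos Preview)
Your proof is correct and follows essentially the same route as the paper's: both test \eqref{proximalmethod} at $v=u^*$, use pseudomonotonicity to make the $F$-term nonpositive, and then unwind the resulting inner-product inequality via the polarization identity of Lemma~\ref{lemma}. The only cosmetic difference is that the paper applies Lemma~\ref{lemma} with the explicit choice $u=(1+\epsilon)(u_{n+1}-u_n)+\epsilon(u^*-u_{n+1})$, $v=u^*-u_{n+1}$ (so that $u+v=(1+\epsilon)(u^*-u_n)$), whereas you first rename this vector $w$ and expand $\|w\|^2$ directly; the computations are identical.
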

\begin{proof}
Let $u^*$ be a solution of the uniformly regular equilibrium problem \eqref{unifreg}. Since $F$ is pseudomonotone, then by Definition \ref{Fpseudo}, the following inequality holds
\begin{equation}\label{pseudo1} 
F(v,u^*)+\frac{k}{2r}\|v-u^*\|^2\leq 0, \ \ \forall v\in \bar{K}.
\end{equation}
Take $v=u_{n+1}$ in \eqref{pseudo1} and $v=u^*$ in \eqref{proximalmethod} then we have, respectively, 
\begin{equation}\label{pseudo2} 
F(u_{n+1},u^*)+\frac{k}{2r}\|u_{n+1}-u^*\|^2\leq 0, \ \ \forall v\in \bar{K}.
\end{equation}
and
\begin{equation}\label{proximalmethod2}
\lambda F(u_{n+1},u^*)+\Big\langle\big (1+\frac{k}{2r}\big )(u_{n+1}-u_n)+\frac{k}{2r}(u^*-u_{n+1}), u^*-u_{n+1}\Big\rangle \geq 0 , \ \ \forall v\in \bar{K},
\end{equation}
Combining \eqref{pseudo2} and \eqref{proximalmethod2} together, we have the lower estimate 
\begin{equation}\label{increasinginequality}
\Big\langle\big (1+\frac{k}{2r}\big )(u_{n+1}-u_n)+\frac{k}{2r}(u^*-u_{n+1}), u^*-u_{n+1}\Big\rangle\geq -\lambda F(u_{n+1},u^*)\geq \frac{\lambda k}{2r}\|u_{n+1}-u^*\|^2\geq 0. 
\end{equation}
Use $u=(1+\epsilon)(u_{n+1}-u_n)+\epsilon (u^*-u_{n+1})$ and $v=u^*-u_{n+1}$ in Lemma \ref{lemma}, we have 
\begin{align*}
2\Big\langle\big (1+\frac{k}{2r}\big )(u_{n+1}-u_n)+\frac{k}{2r}(u^*-u_{n+1}) & , u^*-u_{n+1}\Big\rangle =\|(1+\epsilon)(u^*-u_n)\|^2 \\
& -\|(1+\epsilon)(u_{n+1}-u_n)+\epsilon (u^*-u_{n+1})\|^2-\|u^*-u_{n+1}\|^2.
\end{align*}
By combining this last equation with the inequality \eqref{increasinginequality}, the convergence result follows.
\end{proof}

\begin{theorem}
Let $\HH$ be a finite dimensional space. Let $u_{n+1}$ be the approximate solution of algorithm \ref{mainalgorithm} and $u^*$ be the solution of the uniformly regular equilibrium problem in \eqref{unifreg}, then 
$\displaystyle\lim_{n\to\infty}u_n=u^*.$
\end{theorem}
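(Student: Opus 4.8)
The plan is to run the (quasi-)Fej\'er monotonicity argument, but for the full inertial recursion \eqref{approximate} rather than the reduced scheme \eqref{proximalmethod}. First I would repeat the opening of the proof of Theorem \ref{nonincreasing} with $\gamma_n$ present: putting $v=u^*$ in \eqref{approximate} and invoking the pseudomonotonicity of $F$ (Definition \ref{Fpseudo}) exactly as in \eqref{pseudo1}--\eqref{increasinginequality} gives, with $\epsilon=\tfrac{k}{2r}$,
\[
\Big\langle (1+\epsilon)(u_{n+1}-u_n)+\gamma_n(u_n-u_{n-1})+\epsilon(u^*-u_{n+1}),\ u^*-u_{n+1}\Big\rangle \ \geq\ \lambda\epsilon\,\|u_{n+1}-u^*\|^2 .
\]
The decisive point already visible here is that one must retain the full lower bound $\lambda\epsilon\|u_{n+1}-u^*\|^2$ coming from \eqref{increasinginequality}, rather than weakening it to $\geq 0$ as was done in Theorem \ref{nonincreasing}.

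Next I would expand the inner product by Lemma \ref{lemma}, splitting off the inertial contribution $\gamma_n\langle u_n-u_{n-1},\,u^*-u_{n+1}\rangle$. In the non-inertial case $\gamma_n=0$, keeping the $\lambda\epsilon$ term, this collapses to
\[
\|u_{n+1}-u^*\|\ \leq\ \frac{1+\epsilon}{1+\lambda\epsilon}\,\|u_n-u^*\|,
\]
a genuine contraction precisely when $\lambda>1$; this forces $u_n\to u^*$ geometrically and is what \emph{should} replace the estimate of Theorem \ref{nonincreasing}. For general $\gamma_n$ the inertial cross term must be absorbed, which is where the hypotheses on the parameters enter: assuming $\{\gamma_n\}$ bounded with $\gamma_n\to 0$ (or $\sum_n\gamma_n\|u_n-u_{n-1}\|^2<\infty$) lets me dominate the cross term and reach a quasi-Fej\'er inequality of the form $\|u_{n+1}-u^*\|^2\leq \|u_n-u^*\|^2-c\,\|u_{n+1}-u_n\|^2+\delta_n$ with $\sum_n\delta_n<\infty$.

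From such an inequality the remaining steps are standard. Summing telescopically yields boundedness of $\{u_n\}$ and the asymptotic regularity $\|u_{n+1}-u_n\|\to 0$, hence also $\gamma_n(u_n-u_{n-1})\to 0$. Because $\HH$ is finite dimensional, Bolzano--Weierstrass extracts a subsequence $u_{n_j}\to\bar u\in\bar K$; passing to the limit in \eqref{approximate} along this subsequence, the auxiliary and inertial terms vanish and continuity of $F(\cdot,v)$ leaves exactly $F(\bar u,v)+\epsilon\|v-\bar u\|^2\geq 0$ for all $v\in\bar K$, so $\bar u$ solves \eqref{unifreg}. Finally, since the quasi-Fej\'er estimate holds for \emph{any} solution, it holds for $\bar u$; thus $\{\|u_n-\bar u\|\}$ converges, and as $\|u_{n_j}-\bar u\|\to 0$ the whole sequence converges to $\bar u=u^*$.

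I expect the main obstacle to be the first two steps rather than the topological endgame. The estimate recorded in Theorem \ref{nonincreasing} is in fact expansive (its right-hand side reduces to $\|u_{n+1}-u^*\|\le(1+\epsilon)\|u_n-u^*\|$), so convergence cannot be read off it directly; the repair is to keep the discarded positive term $\lambda\epsilon\|u_{n+1}-u^*\|^2$ and to impose $\lambda>1$. Handling the inertial perturbation $\gamma_n(u_n-u_{n-1})$ rigorously --- pinning down the exact smallness/summability condition on $\{\gamma_n\}$ under which the cross terms are dominated and the quasi-Fej\'er structure survives --- is the genuinely delicate part; finite dimensionality is then used only to convert subsequential convergence into convergence of the full sequence.
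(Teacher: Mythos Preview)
Your plan differs substantially from the paper's argument, and your suspicions about the estimate of Theorem~\ref{nonincreasing} are well founded.

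The paper's proof does not re-derive anything with $\gamma_n$ present. It simply invokes Theorem~\ref{nonincreasing} (which was proved only for the \emph{non-inertial} scheme \eqref{proximalmethod}), asserts from that estimate that $\{\|u_n-u^*\|\}$ is nonincreasing and hence bounded, telescopes to obtain $\sum_{n\geq 0}\|u_{n+1}-(1+\epsilon)u_n+\epsilon u^*\|^2\leq (1+\epsilon)^2\|u_0-u^*\|^2$, and then \emph{lets $\epsilon\to 0$} to conclude $\|u_{n+1}-u_n\|\to 0$. The remainder --- extracting a cluster point in finite dimensions, passing to the limit in the scheme, and using monotonicity of $\|u_n-u^*\|$ to upgrade subsequential to full convergence --- is the same topological endgame you outline.

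So the comparison is this: the paper treats Theorem~\ref{nonincreasing} as a genuine Fej\'er estimate and never touches the inertial term or the $\lambda\epsilon\|u_{n+1}-u^*\|^2$ lower bound, whereas you (correctly) observe that the recorded inequality is expansive by the factor $(1+\epsilon)^2$, that the telescoping and the limit $\epsilon\to 0$ are both unjustified ($\epsilon=k/(2r)$ is a fixed problem parameter), and that the inertial perturbation is simply absent from the argument. Your proposed repair --- retain the discarded term to force a contraction when $\lambda>1$, and impose a summability condition on $\gamma_n$ to absorb the cross term into a quasi-Fej\'er inequality --- is a genuinely different and more honest route; it buys a proof that actually closes, at the cost of hypotheses ($\lambda>1$, smallness of $\gamma_n$) that are not stated in the theorem but appear to be needed. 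The paper's route is shorter only because it elides precisely the steps you flag as delicate.
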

\begin{proof}
Let $u^*\in \bar{K}$ be a solution of \eqref{unifreg} and let $\epsilon >0$ is arbitrarily small. The previous result in Theorem \ref{nonincreasing} showed that $\{\|u^*-u_n\|^2\}$ is nonnegative, $\{\|u^*-u_n\|\}$ is nonincreasing sequence and consequently $\{u_n\}$ is bounded. It also follows that, we have 
$$\sum_{n=0}^\infty\|u_{n+1}-(1+\epsilon)u_n+\epsilon u^*\|^2\leq (1+\epsilon)^2\|u_0-u^*\|^2$$
which, by letting $\epsilon\to 0$, directs to
\begin{equation}\label{convlim}
\lim_{n\to\infty}\|u_{n+1}-u_n\|=0.
\end{equation}
Let $\hat{u}$ be an accumulation point of the successive approximations $\{u_n\}$, hence there exists a subsequence $\{u_{n_k}\}\subseteq\{u_n\}$ that converges to $\hat{u}\in\HH.$ Replace $u_n$ by the subsequence $u_{n_k}$ in \eqref{mainalgorithm} and consider the long-term asymptotic behaviour of the subsequence (i.e., when $n_k\to\infty$) and using \eqref{convlim}, we have
$$ F(\hat{u},v)+\frac{k}{2r}\|v-\hat{u}\|^2\geq 0, \ \forall v\in \bar{K},$$
which implies that $\hat{u}$ solves the uniformly regular equilibrium problem \eqref{unifreg} and
$$\|u_{n+1}-(1+\epsilon)u_n+\epsilon u^*\|^2\leq (1+\epsilon)^2\|u_n-u^*\|^2.$$

Thus it follows from the above inequality that the sequence $\{u_n\}$ has exactly one accumulation point $\hat{u}$ and $\displaystyle\lim_{n\to\infty}u_n=u^*$, the required result.
\end{proof}

\section{Merit functions and descent method for uniformly equilibrium problems}
The descent-type algorithms proposed in this section are a generalization of those proposed by Fukushima for variational inequalities \cite{Fukushima}. 
Merit functions for abstract equilibrium problems are natural generalizations of those known in variational inequalities.   
Hence, we construct the gap (merit) function by utilizing the auxiliary equilibrium which coincides with the original equilibrium problem. This optimization-equilibrium reformulation can be used to develop descent algorithm for solving uniformly regular equilibrium problems under suitable conditions. Recall a regular gap (merit) function $g:\HH\to\RR$ on $\bar{K}\subseteq\HH$ is nonnegative, and $g(u^*)=0$ for any solution $u^*$ of the equilibrium problem if $u^*\in\bar{K}$. Note that $g$ is neither finite nor smooth no convex. It was proved in \cite{Mastroeni} that the equilibrium problem is equivalent to the auxiliary equilibrium problem in which the formulation was termed as; find $u^*\in\bar{K}$ such that 
\begin{equation}\label{AuxiliaryEP}
F(u^*,v)+G(u^*,v)\geq 0, \ \ \forall v\in\bar{K}.    
\end{equation}
A special case for constructing the gap function $G$ is when $G(u,v)=\frac{1}{2}\langle (u-v),M(u-v)\rangle$, was introduced by Fukushima \cite{Fukushima} for variational inequalities, where the matrix $M$ is symmetric and positive definite.
A generalization of the gap function introduced by Fukushima was proposed by replacing the regularizing term $\langle u-v, M(u-v)\rangle /2$ with a general bifunction $G:\HH\times\HH\to\RR$ so that inequality \eqref{AuxiliaryEP} is fulfilled.
For any $\alpha >0,$ the function
$$g_\alpha(u) :=\displaystyle\max_{v\in\bar{K}}\{ F(u,v) -\alpha G(u,v)\}$$
turns out to be a gap function, which is continuously differentiable.
The minimax representation of equilibrium problems in \cite{Mastroeni}, which leads to introducing suitable gap functions acossiated to uinformly regular equilibrium problems, can be characterized by the following lemma.
\medskip

\begin{lemma}
Suppose that $F(v,v)=0, \ \forall v\in K$. Then there exists $u\in K$ such that $F(u,v)\geq 0, \ \forall v\in K$ if and only if 
$$\min_{u}\max_{v}\{-F(u,v)\}=0.$$
\end{lemma}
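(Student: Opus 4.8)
The plan is to analyze the single function
$$
\phi(u) := \max_{v\in K}\{-F(u,v)\},
$$
which is precisely the quantity whose minimum appears in the statement, and to prove two facts: that $\phi$ is nonnegative on $K$, and that $u$ solves the equilibrium problem \eqref{equilibrium} if and only if $\phi(u)=0$. Once these are in hand, both implications of the lemma drop out immediately from the relation between a nonnegative function attaining the value zero and its infimum being zero.

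First I would record the nonnegativity. Fixing any $u\in K$ and choosing the (feasible) test point $v=u$ inside the maximand, the hypothesis $F(u,u)=0$ yields $\phi(u)\geq -F(u,u)=0$. Hence $\phi(u)\geq 0$ for every $u\in K$, and in particular $\min_{u\in K}\phi(u)\geq 0$. This is the one place where the assumption $F(v,v)=0$ is actually used, and it is what pins the optimal value at zero rather than at some negative number.

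Next I would rewrite the equilibrium condition in terms of $\phi$. By definition $u$ satisfies $F(u,v)\geq 0$ for all $v\in K$ exactly when $-F(u,v)\leq 0$ for all $v\in K$, that is, when $\phi(u)=\max_{v\in K}\{-F(u,v)\}\leq 0$; combined with $\phi(u)\geq 0$ this is equivalent to $\phi(u)=0$. The forward direction then follows because a solution $u$ gives $\phi(u)=0$, and since $\phi\geq 0$ on $K$ the minimum is attained there and equals $0$; the converse follows because any minimizer realizing $\min_u\phi(u)=0$ satisfies $\phi(u)=0$ and hence solves \eqref{equilibrium}.

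The step requiring the most care, and the main obstacle, is justifying the $\min$ and $\max$ in the notation $\min_u\max_v$: the inner supremum of $-F(u,\cdot)$ and the outer infimum of $\phi$ must actually be attained. This needs the standard Weierstrass-type hypotheses used implicitly throughout Section~4 (for instance $K$ nonempty and compact, with $F(u,\cdot)$ lower semicontinuous so that $-F(u,\cdot)$ attains its maximum, and $\phi$ lower semicontinuous so that it attains its minimum). Under these conditions the argument is complete; if one prefers to avoid them, the identical reasoning goes through verbatim with $\min,\max$ replaced by $\inf,\sup$, since the equivalence $\phi(u)=0 \Leftrightarrow u \text{ solves } \eqref{equilibrium}$ and the bound $\inf_u \phi(u)\geq 0$ do not depend on attainment.
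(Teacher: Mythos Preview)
Your argument is correct and is exactly the standard gap-function proof of this minimax characterization: establish $\phi(u):=\max_{v}\{-F(u,v)\}\geq 0$ via the diagonal condition, then identify solutions of \eqref{equilibrium} with the zero set of $\phi$. The paper itself does not supply a proof of this lemma---it is quoted as a known result from \cite{Mastroeni}---so there is no in-paper argument to compare against; your treatment of the attainment issue for the $\min/\max$ is in fact more careful than anything the paper states.
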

Next, define the functional $\phi (w):=\frac{1}{2}\langle w-u,w-u \rangle -\lambda F(u,w),$ over the convex set $K$, this is known as the auxiliary energy/cost potential function associated with the auxiliary equilibrium problem.
In the light of the previous formulation we are guided to conclude that uniformly regular equilibrium problems can be reformulated in terms of multiobjective optimization problems as
$$\min_{u\in K}\max_{v\in K}\{-F(u,v)-\frac{k}{2r}\|v-u\|^2\}$$ 
provided that tha optimal value is zero; this leads to consider the following regular gap function for uniformly regular equilibrium problem considered in \eqref{unifreg}:
$$g(u)=\sup_{v}\{F(u,v)-\frac{k}{2r}\|v-u\|^2\}.$$

Hence, it is obvious that the uniformly regular equilibrium problem is a special case of the auxiliary equilibrium problem \eqref{AuxiliaryEP} subject to $G=(-k/2r) \|u-w\|^2$,\ which leads to finding the minimum of the energy functional $\phi$ and reformulating the uniformly regular equilibrium problem \eqref{unifreg} as an optimization problem: 
\begin{equation}\label{gapforUREP}
\max_{w\in K}\{F(u,w)-(\alpha/2)\|u-w\|^2\},
\end{equation}
where $\alpha=\frac{k}{r} >0$ is a constant.  
One of the obstacles that we encounter when dealing with gap functions is that differentiability is not guaranteed. To this end, Fukushima [7] defined a continuously differentiable gap function for variational inequalities and later was generalized to equilibrium problems. By using the equivalence between the original equilibrium and its associated auxiliary equilibrium in \cite{Mastroeni}, this allows to regularize the original problem. Hence, we state sufficient conditions that guarantee the differentiability of the gap function associated to the auxiliary equilibrium which can be asserted by the following theorem.
\medskip
\begin{theorem}\label{gaptheorem}
Let $\bar{K}$ be a closed subset of $\HH$, and $F(x,y)$ be a lower semi-continuous strictly convex smooth function with respect to $y$ for all $x\in \bar{K}$, and smooth with respect to $x$ such that $f'_x$ is continuous on $\bar{K}\times\bar{K}.$ Assume that the assumptions in \eqref{Gbifunction} for the bifunction $G$ are satisfied. Assume that the optimization problem is defined and the maximum is attained. Then 
\begin{equation*}\label{gap}
g(x):=\max_{y}\{-F(x,y)-G(x,y)\}
\end{equation*}
is continuously differentiable gap function for the auxiliary equilibrium problem and its gradients given by 
$$g'(x):=-F'_x(x,y(x))-G'_x(x,y(x))$$ where 
$y(x):=\displaystyle\argmin_{y}\{F(x,y)+G(x,y)\}$
\end{theorem}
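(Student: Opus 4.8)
The plan is to read this as a Danskin-type (parametric envelope) theorem: once the inner optimizer is shown to be a well-defined, continuous single-valued map, the gradient of the optimal value collapses to the partial derivative of the integrand in the outer variable, with no contribution from differentiating through the optimizer. I would set $\Phi(x,y):=F(x,y)+G(x,y)$, so that $g(x)=-\min_{y\in\bar{K}}\Phi(x,y)$ and $y(x)=\argmin_{y\in\bar{K}}\Phi(x,y)$. By \eqref{Gbifunction} the map $y\mapsto\Phi(x,y)$ is uniformly strongly convex with some modulus $\mu>0$ independent of $x$ (strict convexity of $F(x,\cdot)$ contributes convexity and the strong convexity of $G(x,\cdot)$ supplies $\mu$). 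Existence of $y(x)$ is the assumed attainment of the maximum; for uniqueness and for the later continuity I would first record a quadratic growth estimate valid on the prox-regular set. The first-order condition for the constrained minimum reads $-\Phi'_y(x,y(x))\in N^P(\bar{K};y(x))$, and feeding this normal into the defining inequality of $r$-prox-regularity gives $\langle\Phi'_y(x,y(x)),y-y(x)\rangle\ge-\tfrac{1}{2r}\|y-y(x)\|^2$ for $y\in\bar{K}$; combined with strong convexity this yields
$$\Phi(x,y)\ge\Phi(x,y(x))+\tfrac{1}{2}\Big(\mu-\tfrac{1}{r}\Big)\|y-y(x)\|^2,\qquad y\in\bar{K},$$
a genuine growth bound as soon as the regularization dominates the prox-regularity radius, $\mu>1/r$. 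In particular $y(x)$ is unique.

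The step I expect to be the main obstacle is the continuity of $x\mapsto y(x)$ on $\bar{K}$. For $x_n\to x$ I would combine optimality, $\Phi(x_n,y(x_n))\le\Phi(x_n,y(x))$, with the growth bound at $x$ to obtain
$$\tfrac{1}{2}\big(\mu-\tfrac{1}{r}\big)\|y(x_n)-y(x)\|^2\le\big[\Phi(x,y(x_n))-\Phi(x_n,y(x_n))\big]+\big[\Phi(x_n,y(x))-\Phi(x,y(x))\big].$$
The second bracket tends to $0$ by continuity of $\Phi(\cdot,y(x))$; the first requires a uniform control of $|\Phi(x,y)-\Phi(x_n,y)|$ over the bounded set in which the $y(x_n)$ are trapped (their boundedness itself coming from the same growth bound), and for this I would invoke a mean-value estimate supported by the assumed continuity of $F'_x$ and $G'_x$ on $\bar{K}\times\bar{K}$. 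Both brackets therefore vanish and $y(x_n)\to y(x)$.

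With $y(\cdot)$ continuous the envelope argument is routine and, crucially, uses only the membership $y(\cdot)\in\bar{K}$, so the nonconvexity of $\bar{K}$ plays no role here. Writing $\psi:=-\Phi$ and using that $y(x)$ and $y(x+h)$ are each feasible at the other parameter, I obtain the two-sided estimate
$$\psi(x+h,y(x))-\psi(x,y(x))\le g(x+h)-g(x)\le\psi(x+h,y(x+h))-\psi(x,y(x+h)).$$
Expanding the two outer expressions by smoothness of $\psi$ in $x$ produces difference quotients converging to $\psi'_x(x,y(x))$ and to $\psi'_x(x,y(x+h))$; since $y(x+h)\to y(x)$ and $\psi'_x=-F'_x-G'_x$ is continuous, the two limits coincide, so $g$ is differentiable with $g'(x)=-F'_x(x,y(x))-G'_x(x,y(x))$, and continuity of $g'$ follows at once from continuity of $y(\cdot)$, $F'_x$ and $G'_x$.

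It remains to confirm the gap-function properties directly from \eqref{Gbifunction}. Taking the competitor $v=x$ and using $G(x,x)=0$ together with the equilibrium normalization $F(x,x)=0$ gives $g(x)\ge-F(x,x)-G(x,x)=0$, so $g\ge0$ on $\bar{K}$; moreover $g(x^*)=0$ holds exactly when $-F(x^*,v)-G(x^*,v)\le0$ for every $v\in\bar{K}$, i.e. precisely when $x^*$ solves the auxiliary equilibrium problem \eqref{AuxiliaryEP}. Hence $g$ is a continuously differentiable gap function with the asserted gradient.
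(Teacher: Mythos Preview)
Your argument is a genuine, self-contained Danskin/envelope proof, whereas the paper's own proof is a one-sentence deferral to a proposition in \cite{Mastroeni} establishing the equivalence of the equilibrium problem with its auxiliary version; no analytic work is done there. So the routes are genuinely different: you supply the parametric-optimization machinery (uniqueness of the inner minimizer from strong convexity, continuity of $y(\cdot)$ via a quadratic-growth stability estimate, differentiability via the two-sided sandwich $\psi(x+h,y(x))-\psi(x,y(x))\le g(x+h)-g(x)\le\psi(x+h,y(x+h))-\psi(x,y(x+h))$, and finally the direct verification of the gap properties using $G(x,x)=0$), whereas the paper simply cites the literature. Your approach buys transparency and shows precisely which hypotheses are doing work; the paper's approach buys brevity at the cost of content.

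Two technical points worth tightening. First, the prox-regularity inequality you invoke, $\langle\Phi'_y(x,y(x)),y-y(x)\rangle\ge-\tfrac{1}{2r}\|y-y(x)\|^2$, is stated in the paper (Definition~2) only for \emph{normalized} proximal normals; with the correct scaling your threshold should read $\mu>\|\Phi'_y(x,y(x))\|/r$ rather than $\mu>1/r$, so a local uniform bound on $\Phi'_y$ is needed before the growth estimate becomes usable. Second, the theorem as stated only assumes $\bar{K}$ closed, and you have imported uniform prox-regularity from the ambient setting of the paper; this is reasonable in context, but you should flag it explicitly, since without it (or convexity) your first-order-condition-plus-growth step does not go through. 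Neither point undermines the strategy; both are easily repaired by adding the standing prox-regularity hypothesis and a localization argument.
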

\begin{proof}
In order to obtain a continuously differentiable gap function for auxiliary equilibrium problems we use a proposition in \cite{Mastroeni} such that it proves the equivalence between equilibrium problems and their auxiliary ones, hence the existence result for gap functions follows.
\end{proof}
Apply this theorem, in our case, to uniformly regular equilibrium problem then it is clear that \eqref{gapforUREP} is actually the associated gap function, i.e.,
\begin{equation}\label{gapmeritUREP}
g(u):=\max_{w\in K}\{F(u,w)-(\alpha/2)\|u-w\|^2\}
\end{equation}
and its gradients 
\begin{equation*}
g'(u):=F'_u(u,w)-\frac{d}{du}\Big(\frac{\alpha}{2}\|u-w\|^2\Big)
\end{equation*}
\begin{algorithm}
\begin{alg}[Descent Method]\label{descentalgorithm}
Let $u_0\in\bar{K}$, and let the gap function $g$ be defined as in \eqref{gapmeritUREP} \\
\medskip

\textbf{Step.1} \ (iteration) \hspace{3cm} $u_{n+1}:=u_n+t_nd_n$ \\ 

where $d_n=w(u_n)-u_n,$ is the descent direction for $g$ at $u_n$, provided that $$w(u_n):=\displaystyle\argmin_{w}\{F(u_n,w)+\frac{\alpha}{2}\|w-u_n\|^2\},$$ and $t_n$ is the solution of $$\displaystyle\min_{t\in [0,1]}g(u_n+td_n).$$
\\ 
\medskip 
\textbf{Step.2} \ If $\|u_{n+1}-u_n\|<\alpha$ for some fixed $\alpha >0$ then \textbf{STOP},

otherwise put $n=n+1$ and repeat \textbf{Step.1}
\end{alg}
\end{algorithm}

To show that $d_n$ is a descent direction for $g$ at the point $x_n.$
We assume the following necessary condition: 
\begin{equation}\label{necessarycondition}
\langle F'_u(u,w)+\frac{d}{du}\Big(\frac{\alpha}{2}\|u-w\|^2\Big)+ F'_w(u,w)+\frac{d}{dw}\Big(\frac{\alpha}{2}\|u-w\|^2\Big),u-w\rangle\geq 0, \ \ \forall(u,w)\in\bar{K}\times\bar{K}.
\end{equation}
Note that if $F(u,w):=\langle T(u),w-u\rangle$ then the above necessary condition holds provided that $\ \nabla T(u)$ is a positive definite matrix, for all $u\in\bar{K}$, and it also holds if $F$ is strictly convex on $\bar{K}$ with respect to $w$.
\medskip 
\begin{proposition}
Suppose that the hypotheses of Theorem \ref{gaptheorem} hold and moreover the assumption \eqref{necessarycondition} is applied. Then the distance function $d(u):=w(u)-u$ is a descent direction for $g$ at $u\in\bar{K}$, provided that $w(u)\not=u.$
\end{proposition}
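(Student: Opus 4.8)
The plan is to establish the directional-derivative inequality $\langle g'(u), d(u)\rangle < 0$, which is exactly the assertion that $d(u) = w(u)-u$ is a descent direction for $g$ at $u$. First I would invoke Theorem \ref{gaptheorem}: since its hypotheses are assumed, $g$ is continuously differentiable, and by the envelope (Danskin) mechanism underlying that theorem its gradient retains only the explicit dependence on the first argument, evaluated at the inner optimizer $w(u)$, namely
\begin{equation*}
g'(u) = -F'_u(u, w(u)) - G'_u(u, w(u)), \qquad G(u,w) = \tfrac{\alpha}{2}\|u-w\|^2 .
\end{equation*}
For this step one must know that the inner optimizer $w(u)$ is attained and unique; this is guaranteed by the uniform strong convexity of $G(u,\cdot)$ in \eqref{Gbifunction} together with the strict convexity of $F(u,\cdot)$ and the closedness of $\bar{K}$, so that the envelope argument applies and $d(u)$ is well defined.

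Next I would write down the first-order optimality condition for $w(u)$. Because $w(u) = \argmin_w\{F(u,w)+G(u,w)\}$ solves a strongly convex inner problem over $\bar{K}$, it is characterized by the variational inequality $\langle F'_w(u,w(u)) + G'_w(u,w(u)), v - w(u)\rangle \geq 0$ for all $v\in\bar{K}$. Choosing the feasible test point $v=u$ converts this into an inequality for $\langle F'_w(u,w(u)) + G'_w(u,w(u)), d(u)\rangle$, which I will use to eliminate the $w$-derivatives from the directional derivative.

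The core computation then combines three facts. Forming $\langle g'(u), d(u)\rangle$ from the gradient above, I would substitute the optimality inequality to control the $F'_w, G'_w$ contributions and then invoke the necessary condition \eqref{necessarycondition} to control the surviving $F'_u + F'_w$ term. The quadratic choice of $G$ is what makes this close cleanly: since $G'_u(u,w) + G'_w(u,w) = \alpha(u-w) + \alpha(w-u) = 0$, the regularizing derivatives in \eqref{necessarycondition} cancel, so that condition reduces to a monotonicity-type requirement on $F$ alone, matching exactly the combination left after the substitution. Assembling these yields $\langle g'(u), d(u)\rangle \leq 0$, and I would upgrade this to a strict inequality using $w(u)\neq u$ together with the strict convexity of $F(u,\cdot)$ (equivalently, positive definiteness of $\nabla T$ in the variational-inequality special case $F(u,w) = \langle T(u), w-u\rangle$), which forces the decisive inner product to be strictly positive.

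The hard part will be the sign bookkeeping in this final assembly: the gradient formula, the optimality variational inequality, and \eqref{necessarycondition} each carry their own orientation, and they must be chained so that the $\alpha\|d(u)\|^2$ terms arising from the quadratic regularization and from the optimality condition reinforce rather than cancel the sign produced by \eqref{necessarycondition}. Obtaining genuine strict negativity, rather than merely $\langle g'(u), d(u)\rangle \leq 0$, is the one place where the strict convexity (positive-definiteness) hypothesis is indispensable, and I would isolate that as the sole step that actually consumes $w(u)\neq u$.
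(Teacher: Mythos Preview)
Your plan is correct and follows essentially the same route as the paper: compute $g'(u)$ from Theorem~\ref{gaptheorem}, write the first-order variational inequality characterizing $w(u)$, test it at $v=u$, and combine the resulting inequality with \eqref{necessarycondition} to obtain $\langle g'(u),d(u)\rangle<0$. The only cosmetic difference is that the paper asserts the strict inequality immediately upon setting $z=u$ in the optimality VI, whereas you (more carefully) isolate the passage from $\leq$ to $<$ as the single step that consumes $w(u)\neq u$ via strict convexity; the paper also keeps the $G$-derivatives in \eqref{necessarycondition} rather than observing, as you do, that for the quadratic regularizer they cancel.
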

\begin{proof}
It is clear to observe that $u^*$
is a solution for equilibrium problem if and only if $w(u^*)=u^*$. The strictly convexity of $F$ with respect to $w$ implies that there exists a unique minimum point $w(u)=\arg\displaystyle\min_{w}F(u,w)$, which satisfies the following variational inequality holds:
$$\langle F'_w(u,w(u))+\frac{d}{dw}\Big(\frac{\alpha}{2}\|u-w(u)\|^2\Big),z-w(u)\rangle\geq 0, \ \ \forall z\in\bar{K}.$$
Setting $z:=u$, we have $\langle F'_w(u,w(u))+\frac{d}{dw}\Big(\frac{\alpha}{2}\|u-w(u)\|^2\Big),w(u)-u\rangle < 0.$ Combining the latter inequality with \eqref{necessarycondition}, we obtain 
$$-\langle F'_u(u,w(u))+\frac{d}{du}\Big(\frac{\alpha}{2}\|u-w\|^2\Big),u-w(u)\rangle\leq\langle F'_w(u,w(u))+\frac{d}{dw}\Big(\frac{\alpha}{2}\|u-w(u)\|^2\Big),u-w(u)\rangle <0.$$
Invoking Theorem \ref{gaptheorem} and using the gradient 
\begin{equation*}
g'(u):=F'_u(u,w)-\frac{d}{du}\Big(\frac{\alpha}{2}\|u-w\|^2\Big),
\end{equation*}
we obtain $\langle g'(u),u-w(u)\rangle <0.$

\end{proof}
\begin{theorem}\label{convergence thm}
Let $G$ be defined as in Definition \ref{Gbifunction}. Suppose that $\bar{K}$ is a compact set in $\HH$ and assume that the necessary condition \eqref{necessarycondition} is valid.
Then for any given initial point $u_0\in\bar{K}$, the sequence $\{u_n\}$ generated by Algorithm \ref{descentalgorithm} belongs to $\bar{K}$ and any accumulation point of $\{u_n\}$ is a solution to the uniformly regular equilibrium problem. 
\end{theorem}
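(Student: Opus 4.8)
The plan is to treat Algorithm \ref{descentalgorithm} as a classical feasible--descent scheme for the nonnegative, continuously differentiable gap function $g$ in \eqref{gapmeritUREP}, and to combine the strict descent property of the preceding Proposition with the compactness of $\bar{K}$ in a standard ``subsequential accumulation point'' argument. First I would check feasibility, i.e. that $\{u_n\}\subset\bar{K}$. Since $u_0\in\bar{K}$ and each $w(u_n)=\argmin_{w}\{F(u_n,w)+\tfrac{\alpha}{2}\|w-u_n\|^2\}$ is taken over $\bar{K}$, every iterate $u_{n+1}=u_n+t_nd_n=(1-t_n)u_n+t_nw(u_n)$ lies on the segment $[u_n,w(u_n)]$ for $t_n\in[0,1]$; here the uniform $r$-prox-regularity of $\bar{K}$ is precisely what is needed to keep these near-projection combinations inside $\bar{K}$, so that the line search $\min_{t\in[0,1]}g(u_n+td_n)$ is well posed over $\bar{K}$.

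Next I would establish monotonicity of the objective along the iterates. By the preceding Proposition, whenever $w(u_n)\neq u_n$ the direction $d_n=w(u_n)-u_n$ is a strict descent direction for $g$ at $u_n$, so exact line search gives
$$g(u_{n+1})=\min_{t\in[0,1]}g(u_n+td_n)\le g(u_n),$$
with strict inequality unless $w(u_n)=u_n$. Since $g$ is a gap function it is nonnegative, so $\{g(u_n)\}$ is nonincreasing and bounded below, hence convergent to some $g^\ast\ge 0$ with $g(u_n)\ge g^\ast$ for every $n$. Compactness of $\bar{K}$ then furnishes a subsequence $u_{n_k}\to\hat{u}\in\bar{K}$, and the continuity of $g$ guaranteed by Theorem \ref{gaptheorem} yields $g(\hat{u})=\lim_k g(u_{n_k})=g^\ast$.

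The crux, and the step I expect to be the main obstacle, is to show $g(\hat{u})=0$, equivalently $w(\hat{u})=\hat{u}$, which by the gap-function property identifies $\hat{u}$ as a solution of \eqref{unifreg}. I would argue by contradiction: if $w(\hat{u})\neq\hat{u}$, then the Proposition (via the necessary condition \eqref{necessarycondition}) makes $d(\hat{u})=w(\hat{u})-\hat{u}$ a strict descent direction, so $g(\hat{u}+\bar{t}\,d(\hat{u}))\le g(\hat{u})-\delta$ for some $\bar{t}\in(0,1]$ and $\delta>0$. The delicate ingredient is the continuity of the solution map $u\mapsto w(u)$: strict convexity of $F(u,\cdot)+\tfrac{\alpha}{2}\|\cdot-u\|^2$ makes $w(u)$ the unique minimizer, and a Berge-type maximum-theorem argument, together with the $C^1$ regularity from Theorem \ref{gaptheorem}, transfers the single-step decrease from $\hat{u}$ to nearby iterates. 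Using this continuity and that of $g$, for large $k$ one obtains $g(u_{n_k}+\bar{t}\,d(u_{n_k}))\le g(\hat{u})-\tfrac{\delta}{2}$, whence by the line search $g(u_{n_k+1})\le g^\ast-\tfrac{\delta}{2}<g^\ast$, contradicting $g(u_n)\ge g^\ast$.

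Therefore $w(\hat{u})=\hat{u}$ and $g(\hat{u})=0$, so $\hat{u}$ solves the uniformly regular equilibrium problem; since $\hat{u}$ was an arbitrary accumulation point of $\{u_n\}$, the conclusion follows. I anticipate the two genuinely technical points to be (i) the feasibility claim $u_{n+1}\in\bar{K}$ on the nonconvex set $\bar{K}$, which must lean on its prox-regularity rather than convexity, and (ii) the continuity of the argmin map $u\mapsto w(u)$, without which the descent at $\hat{u}$ cannot be propagated back to the tail of the subsequence.
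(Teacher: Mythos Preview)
Your argument is essentially correct and follows the classical feasible--descent template, but it is not the route the paper takes. The paper's proof is much terser and \emph{outsources} the key convergence step: after checking feasibility and invoking Theorem~4.3.3 of \cite{Bank} to get continuity of $u\mapsto w(u)$ (hence of $d(u)$), it observes that the exact line--search map $\Phi(u;d)=\{w=u+td:\ g(w)=\min_{t\in[0,1]}g(u+td)\}$ is closed whenever $g$ is continuous, and then applies Zangwill's general convergence theorem \cite{Zangwill} (see also \cite{Minoux}) to the composite iteration $u_{n+1}\in\Phi(u_n;d(u_n))$. Your proposal instead unrolls that abstract machinery by hand: you build the monotone sequence $\{g(u_n)\}$, extract an accumulation point by compactness, and run a direct contradiction using the strict descent at $\hat u$ propagated through the continuity of $w(\cdot)$ and $g$. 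What your route buys is self--containment and transparency (no external black box), at the cost of a slightly longer argument; the paper's route is shorter but leans entirely on the Zangwill framework. Both need exactly the same two analytic ingredients you flagged: continuity of the argmin map $w(\cdot)$ and closedness/continuity of the line--search step.

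One caution on feasibility: your appeal to uniform prox--regularity to keep the segment $[u_n,w(u_n)]$ inside $\bar K$ is not justified---prox--regularity does \emph{not} imply that convex combinations of points of $\bar K$ remain in $\bar K$. The paper in fact writes ``the convexity of $\bar K$'' at this step, effectively assuming $\bar K$ convex for the purpose of the descent algorithm; you should either adopt that assumption explicitly or acknowledge that, absent convexity, an additional structural hypothesis on $\bar K$ is required for the line search to stay feasible.
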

\begin{proof}
The convexity of $\bar{K}$ indicates that the generated sequence $\{u_n\}$ belongs to $\bar{K}$ for all values $0\leq t_n\leq 1$. Since the uniformly regular $F(u,w)+\frac{\alpha}{2}\|w-u\|^2$ is strictly convex with respect to $w$ then there exists a unique minimum point $$w(u):=\arg\displaystyle\min_{w} \{F(u,w)+\frac{\alpha}{2}\|w-u\|^2\}.$$ Applying
Theorem $4.3.3$ of \cite{Bank}, we conclude that $w(u)$ is single-valued and continuous at $u$, which implies that the function $d(u) := w(u)-u$ is continuous on $\bar{K}$. It is known that the set 
$$\Phi(u;d)=\{w:w=u+t_nd,\ g(w)=\min_{0\leq t\leq 1}g(u+td)$$ is closed if $g$ is a continuous map. Hence, one can suggest the iteration $u_{n+1}=\Phi(u_n;d(u_n))$ and show that is itself closed, see for instance \cite{Minoux}. By  Zangwill’s convergence theorem \cite{Zangwill}, it is obvious that any accumulation point of the sequence $\{u_n\}$ is a solution of the uniformly regular equilibrium problem \eqref{unifreg}. 
\end{proof}
\medskip

\section{conclusion}
The auxiliary principle technique, used in \cite{Glowinski, Noor} for variational inequalities and equilibrium problems, has been adopted to develop some new implicit iterative algorithms (a.k.a proximal-type methods) along with their convergence analysis for solving uniformly regular equilibrium problems that are defined on prox-regular nonconvex sets. The associated operator is pseudomonotone, and therefore extending the results of this work to the case of quasimonotonicity is promising. Every pseudomonotone is quasimonotone but the converse not true. 

Moreover, gap (merit) functions were constructed for uniformly regular equilibrium problems based on the role it plays for bridging equilibrium and optimization problems to investigate and suggest some descent-type methods for the corresponding formulation. The convergence of descent methods developed for abstract equilibrium problems is usually based on smoothness assumptions. We think that some efforts should be devoted to develop algorithms for nonsmooth problems, which include nonsmooth Nash equilibrium problems as special cases. The proposed descent algorithm is a natural extension of those improved for variational inequalities \cite{Fukushima}. The analysis of the gap function approach for equilibrium problems allows to extend the applications to further variational formulations as, for instance, the Minty (dual) Variational Inequality which can be formulated as finding 
$$x^*\in K \ s.t. \ \langle F(y),y-x^*\rangle\geq 0, \ \ \forall y\in K$$
which, under the continuity and pseudomonotonicity of the operator F, is equivalent to variational inequalities. Moreover, it would be interesting to extend the Moreau-Yosida regularization to merit functions for abstract equilibrium problems. Another future research direction is pertaining the mixed variational equilibrium problem which covers almost all other problems and treat them as special cases. Finally, we believe that new merit functions could be developed without assuming the convexity; this might allow to extend the merit function approach to nonconvex Nash equilibrium problems and the generalized Nash equilibrium problems (equivalently, quasi-variational inequalities where the feasible set is convex-valued constraint map).


\medskip

\subsection*{Declarations} 
The author declares that there was no conflict of interest or competing interest.



\end{document}